\documentclass[11pt]{amsart}
\usepackage{amsmath, amssymb,amsthm, fullpage, epic, eepic, graphics, mathrsfs, hyperref}
\usepackage{pgfplots}
\usepackage{subcaption} %seems to be needed for side by side in Fig 2
\usepackage{caption}
%\usetikzlibrary{arrows} 
%\usepackage[all]{xypic} needed for Fano plane in commented section at end

\newtheorem{thm}[equation]{Theorem}
\newtheorem{prop}[equation]{Proposition}

\newtheorem{lemma}[equation]{Lemma}

\theoremstyle{definition}
\newtheorem{defn}[equation]{Definition}

\theoremstyle{remark}

\makeatletter

\begin{document}

\title{A Bollobás-type problem: from root systems to Erd\H{o}s--Ko--Rado}
% what is the unicode for O in ERK ? rather than the \H

\author{Patrick J. Browne, Qëndrim R. Gashi and Padraig \'O Cath\'ain}
\thanks{P. J. Browne and P. \'O Cath\'ain acknowledge the National Forum for Enhancement of Teaching and Learning and Technical University of the Shannon who supported this publication through \textit{Strategic Alignment of Teaching and Learning Enhancement} funding. P. \'O Cath\'ain acknowledges the support of the Conference Participation Scheme of the Faculty of Humanities and Social Sciences at Dublin City University.}
%\vskip3mm
% Address
\address{Department of Mathematics,
University of Prishtina\\
rr. Nena Tereze, p.n., 10000 Pristina,
Kosovo\\
qendrim.gashi@uni-pr.edu}

\address{Department of Electrical \& Electronic Engineering, Technological University of the Shannon, Limerick, V94 EC5T, Ireland, patrick.browne@tus.ie  }

\address{Fiontar agus Scoil na Gaeilge, Dublin City University, All Hallows Campus, D09 N920, Ireland, padraig.ocathain@dcu.ie }

\begin{abstract}
Motivated by an Erd\H{o}s--Ko--Rado type problem on sets of strongly orthogonal roots in the $A_{\ell}$ root system, we estimate bounds for the size of a family of pairs $(A_{i}, B_{i})$ of $k$-subsets in $\{ 1, 2, \ldots, n\}$ such that $A_{i} \cap B_{j}= \emptyset$ and $|A_{i} \cap A_{j}| + |B_{i} \cap B_{j}| = k$ for all $i \neq j$. This is reminiscent of a classic problem of Bollob\'as. We provide upper and lower bounds for this problem, relying on classical results of extremal combinatorics and an explicit construction using the incidence matrix of a finite projective plane.

\textbf{MSC 2020: 
05D05, %Extremal Set theory - used for EKR theorems
17B22 %Root Systems
}\\
\textbf{Keywords: Erd\H{o}s-Ko-Rado, root system, strongly orthogonal roots}
\end{abstract}

\maketitle

\section{Introduction}
Given a vector space $V \leq \mathbb{R}^{\ell+1}$, equipped with the standard inner product, a subset $R\subseteq V$ is called a \textit{root system} if the following conditions are satisfied.
\begin{enumerate} 
\item The roots span $V$, and do not contain the zero vector.
\item The only scalar multiple of $\alpha \in R$ also contained in $R$ is $-\alpha$. 
\item Each root $\alpha \in R$ determines a reflection $x \mapsto x - 2\frac{\langle x,\alpha\rangle}{\langle \alpha, \alpha\rangle} \alpha$. The set $R$ is closed under these reflections. 
\item For any $\alpha, \beta \in R$, the value $2\frac{\langle \alpha, \beta\rangle}{\langle \alpha, \alpha\rangle}$ is an integer.
\end{enumerate}

It is a remarkable fact that there are precisely four infinite families of irreducible root systems, together with additional examples in low dimensions. Root systems play a key role in the classification of semi-simple Lie algebras, see Chapter 9 of Humphrey's monograph \cite{humphreys}.

\begin{defn}\label{TypeA}
Denote by $\varepsilon_{i}$ the $i^{\textrm{th}}$ standard basis vector for $\mathbb{R}^{\ell+1}$. The set of vectors $\varepsilon_{i} - \varepsilon_{j}$ for $j\neq i$ is the \textit{Type A root system}, denoted $A_{\ell}$. The vectors $\alpha_{i} = \varepsilon_{i} - \varepsilon_{i+1}$ with $i = 1, 2, \ldots, \ell$ form a \textit{basis} for $A_{\ell}$; these are called \textit{simple roots}. 
\end{defn}

It is a simple calculation to verify that the vectors of Definition \ref{TypeA} indeed satisfy the axioms of a root system. The inner product of two roots is negative if and only if their sum is a root; similarly, the inner product of two roots is positive if and only if their difference is a root. The Dynkin diagram for the $A_\ell$ root system is given below. This shows the $\ell$ simple roots with an edge between them if and only if their sum is a root. 

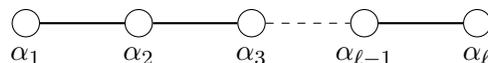
\begin{figure}[h]
\begin{center}
 \begin{tikzpicture}
\node[shape=circle,draw,label=below:$\alpha_1$] (1) at (0,8)   {};
\node[shape=circle,draw,label=below:$\alpha_2$] (2) at (1.5,8)   {};
\node[shape=circle,draw,label=below:$\alpha_3$] (3) at (3,8)   {};
\node[shape=circle,draw,label=below:$\alpha_{\ell -1}$] (4) at (4.5,8)   {};
\node[shape=circle,draw,label=below:$\alpha_\ell$] (5) at (6,8)   {};
\path
(1) edge[thick]  (2)
(2) edge[thick] (3)
(3) edge[dashed] (4)
(4) edge[thick] (5);
\end{tikzpicture}
\end{center}
\caption{Dynkin diagram of $A_\ell$.}
\label{da3}
\end{figure}

\begin{defn}
Two roots in $R$ are \textit{strongly orthogonal} if neither their sum nor their difference is a root. A subset of $R$ consisting of $k$ roots, each pair of which is strongly orthogonal is called a \textit{strongly orthogonal subset} of $R$. The set of all such subsets is denoted $SOS_{k}(R)$. 
\end{defn}

For example, a set of simple roots in $A_{\ell}$ is strongly orthogonal if and only if they correspond to an independent set in the Dynkin diagram (that is, a subset of vertices in the diagram, no two connected by an edge). Recall that the \textit{support} of a vector is the collection of co-ordinates in which the vector is non-zero. An arbitrary pair of roots in $A_{\ell}$ is strongly orthogonal if and only if the roots have disjoint supports. While this paper is mostly concerned with the type $A$ root systems, much is known about $SOS_{k}(R)$ for an arbitrary root system. The next result characterises the maximal size of a strongly orthogonal subset in an irreducible root system. 

\begin{prop}[\cite{agaoka}, \cite{bedullia}] \label{known} 
The set $SOS_k(R)$ is non-empty in the following cases:\smallbreak
\begin{minipage}{.5\textwidth}
\begin{itemize}
\item[(i)] For $k \leq \left\lfloor \frac{\ell+1}{2} \right\rfloor $ when $R=A_{\ell}$.
\item[(ii)] For $k \leq \ell$ when $R=B_{\ell}$.
\item[(iii)] For $k \leq \ell$ when $R=C_{\ell}$.
\item[(iv)] For $k \leq 2 \left\lfloor \frac{\ell}{2} \right\rfloor$ when $R=D_{\ell}$.
\end{itemize}
\end{minipage}
\hfill  % Adds horizontal space between minipages
\begin{minipage}{.5\textwidth}
\begin{itemize}
\item[(v)] For $k \leq 3$ when $R=F_4$.
\item[(vi)] For $k \leq 4$ when $R=E_6$.
\item[(vii)] For $k \leq 7$ when $R=E_7$.
\item[(viii)] For $k \leq 8$ when $R=E_8$.
\item[(ix)] For $k \leq 2$ when $R=G_2$.
\end{itemize}
\end{minipage}
In all other cases, $SOS_k(R)$ is empty.
\end{prop}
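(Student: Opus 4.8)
The plan is to recast the problem in terms of orthogonal roots, and ultimately in terms of root subsystems of type $A_1^k$. Throughout, a strongly orthogonal subset is understood to consist of pairwise non-proportional roots (without this, $\{\alpha,-\alpha\}$ would qualify vacuously, since $0\notin R$ and $2\alpha\notin R$). The starting point is the standard fact, recorded for $A_\ell$ after Definition~\ref{TypeA} and valid in every root system, that for non-proportional roots a positive inner product forces the difference to be a root while a negative inner product forces the sum to be a root. Hence two strongly orthogonal roots are orthogonal, and two orthogonal roots $\alpha,\beta$ are strongly orthogonal exactly when $\{\pm\alpha,\pm\beta\}$ is a subsystem of type $A_1\times A_1$ (equivalently, $\alpha\pm\beta\notin R$). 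Consequently, if $\{\alpha_1,\dots,\alpha_k\}$ is strongly orthogonal then $\{\pm\alpha_1,\dots,\pm\alpha_k\}$ is a subsystem of type $A_1^k$, and conversely; so $\max\{k : SOS_k(R)\neq\emptyset\}$ is the largest $k$ for which $R$ contains a subsystem of type $A_1^k$.

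Because orthogonal roots are linearly independent, this already yields the crude bound $k\le\operatorname{rank}(R)=\ell$. For $B_\ell$, $C_\ell$, $E_7$, $E_8$ and $G_2$ this crude bound coincides with the asserted value, so for these it remains only to exhibit a strongly orthogonal family attaining it, each verification being the routine check that the chosen roots are pairwise orthogonal with neither sum nor difference a root. I would record $\{2\varepsilon_1,\dots,2\varepsilon_\ell\}$ for $C_\ell$; the roots $\varepsilon_{2i-1}\pm\varepsilon_{2i}$, augmented by $\varepsilon_\ell$ when $\ell$ is odd, for $B_\ell$; a $7A_1$ subsystem for $E_7$, an $8A_1$ subsystem for $E_8$, and an orthogonal long--short pair for $G_2$.

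For $A_\ell$ and $D_\ell$ the rank bound is not sharp and a short combinatorial argument is needed. In $A_\ell$, two distinct orthogonal roots $\varepsilon_i-\varepsilon_j$ and $\varepsilon_p-\varepsilon_q$ must have disjoint supports, since a single shared index contributes $\pm1$ to the inner product; as there are only $\ell+1$ coordinates, this forces $k\le\lfloor(\ell+1)/2\rfloor$, attained by $\{\varepsilon_{2i-1}-\varepsilon_{2i}\}$. In $D_\ell$, two orthogonal roots $\pm\varepsilon_a\pm\varepsilon_b$ and $\pm\varepsilon_c\pm\varepsilon_d$ either have disjoint supports or satisfy $\{a,b\}=\{c,d\}$; hence a strongly orthogonal family partitions into blocks supported on disjoint coordinate pairs, each block containing at most the two roots $\varepsilon_a\pm\varepsilon_b$, giving $k\le 2\lfloor\ell/2\rfloor$, attained by the roots $\varepsilon_{2i-1}\pm\varepsilon_{2i}$.

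The genuine obstacle is $E_6$, where the rank equals $6$ but the asserted maximum is only $4$: here no support argument is available, and the task is to prove that $E_6$ contains no subsystem of type $A_1^5$. I would approach this through the classification of root subsystems, but establishing the sharp cap $k\le 4$ is genuinely a finite computation, equivalently the determination that the maximal number of mutually orthogonal roots in $E_6$ is four, and this is precisely the content supplied by \cite{agaoka, bedullia}. Finally, $F_4$ warrants explicit caution: having two root lengths, orthogonality and strong orthogonality no longer coincide, and one checks directly that the four long roots $\varepsilon_1\pm\varepsilon_2$ and $\varepsilon_3\pm\varepsilon_4$ are already mutually strongly orthogonal, so the elementary method here produces a family of size four meeting the rank bound. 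Reconciling this with the stated value $k\le 3$ requires a reading more restrictive than the definition given here, and I would check the precise conventions of \cite{agaoka, bedullia} at this point.
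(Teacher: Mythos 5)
The paper supplies no proof of Proposition \ref{known} at all --- the result is imported wholesale from \cite{agaoka} and \cite{bedullia} --- so any argument you give is by definition a different route, and your self-contained treatment of the infinite families is a genuine addition. Your reduction (strong orthogonality is orthogonality together with $\alpha\pm\beta\notin R$, whence $k\le\operatorname{rank}R$ by linear independence of orthogonal vectors) is sound, and the support arguments for $A_{\ell}$ and $D_{\ell}$ together with the explicit families for $B_{\ell}$, $C_{\ell}$ and $G_2$ are correct and complete. The gaps you acknowledge are real but of different weights: the existence of $7A_1$ in $E_7$ and $8A_1$ in $E_8$ is routine and should simply be exhibited (e.g.\ via $D_6\times A_1\subseteq E_7$ and $D_8\subseteq E_8$, checking sums and differences against the ambient system), whereas the upper bound of $4$ for $E_6$ is the one case that genuinely resists a support count; there your proposal reduces to a citation, which is exactly what the paper does for the entire proposition, so you are no worse off than the source.

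Your $F_4$ computation deserves a stronger conclusion than the hedge you give it: it is correct, and it shows that item (v) is wrong as printed under the paper's own definition. The four long roots $\varepsilon_1\pm\varepsilon_2$ and $\varepsilon_3\pm\varepsilon_4$ are pairwise orthogonal, and every pairwise sum or difference is of the form $\pm2\varepsilon_i$ or $\pm\varepsilon_1\pm\varepsilon_2\pm\varepsilon_3\pm\varepsilon_4$, each of squared length $4$; since every root of $F_4$ has squared length $1$ or $2$, none of these is a root, so $SOS_4(F_4)\neq\emptyset$ and the bound in (v) should read $k\le 4$ (consistent with the rank bound, and with the fact that these four roots already form a strongly orthogonal quadruple inside the $D_4$ of long roots). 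This is not a discrepancy of convention to be reconciled against \cite{agaoka,bedullia}; the set you wrote down satisfies the definition stated in this paper, so you should record it as a correction to the statement rather than as a point of doubt about your own argument.
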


\section{Erd\H{o}s--Ko--Rado type theorems}

Let $n, k \in \mathbb{N}$ and write $[n]:=\{ 1, 2, \ldots, n\}$. Denote by $\mathcal{V}(n,k):=\{ X \subseteq [n]: |X|=k \}$ the set of $k$-element subsets of $[n]$. The classical Erd\H{o}s--Ko--Rado theorem is a cornerstone of extremal combinatorics. 

\begin{thm}[\cite{EKR-original,EKR}]\label{EKR}
Let $\mathcal{F} \subseteq \mathcal{V}(n,k)$ and suppose that $X \cap Y \neq \emptyset$ for all $X, Y \in \mathcal{F}$. Then 
\[ |\mathcal{F}| \leq \binom{n-1}{k-1}\,.\]
Moreover, if $n > 2k$, equality holds if and only if $\mathcal{F}$ consists of all elements of $\mathcal{V}(n,k)$ that contain a given element from $[n]$.
\end{thm}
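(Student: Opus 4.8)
The plan is to establish the upper bound by Katona's cyclic-permutation averaging argument, which is short and self-contained, and then to treat the equality statement separately, as it is the genuinely delicate part.

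First I would fix a cyclic ordering of $[n]$, thought of as arranged around a circle, and call a $k$-subset of $[n]$ an \emph{arc} if its elements are cyclically consecutive; there are exactly $n$ arcs in each cyclic ordering. The key local lemma is: if $n \geq 2k$, then an intersecting family $\mathcal{F}$ can contain at most $k$ of these $n$ arcs. To see this, suppose some arc $A$ lies in $\mathcal{F}$. The other arcs meeting $A$ are exactly those beginning at one of the $2k-2$ positions flanking the start of $A$, and these can be grouped into $k-1$ pairs of arcs that are mutually disjoint (this is precisely where $n \geq 2k$ is used). Since $\mathcal{F}$ is intersecting, it contains at most one arc from each such pair, giving at most $(k-1)+1=k$ arcs in total.

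Next I would run the averaging step over all $(n-1)!$ cyclic orderings, counting incidences between cyclic orderings and members of $\mathcal{F}$ that happen to be arcs. The local lemma bounds this count above by $k\,(n-1)!$. On the other hand, a fixed $k$-set occurs as an arc in exactly $k!\,(n-k)!$ cyclic orderings: glue its elements into a single block, arrange the resulting $n-k+1$ objects in a directed cycle in $(n-k)!$ ways, and order the block internally in $k!$ ways. Equating the two counts yields $|\mathcal{F}|\cdot k!\,(n-k)! \leq k\,(n-1)!$, which rearranges to $|\mathcal{F}| \leq \binom{n-1}{k-1}$, as required.

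The hard part will be the equality characterisation for $n > 2k$. Here I would first refine the local lemma: when the inequality $n > 2k$ is strict, the only way for $k$ arcs of an intersecting family to be realised within a single cyclic ordering is for all $k$ of them to contain one common element, i.e. to be the $k$ consecutive arcs through that point. Equality in the global count forces this extremal configuration to occur in \emph{every} cyclic ordering simultaneously, and the main obstacle is to promote these local statements into the single global conclusion that $\mathcal{F}$ is a star, namely the set of all $k$-subsets containing one fixed element of $[n]$. A clean way to do this is to leave the cycle method at this point and instead invoke the shifting (compression) technique: the operators $S_{ij}$ that replace $j$ by $i$ preserve both $|\mathcal{F}|$ and the intersecting property, so one may reduce to a fully shifted family; for such a family attaining the bound one inducts on $n$ and $k$ to show that a fixed element lies in every member, with the strict inequality $n > 2k$ exactly ruling out the further extremal families that appear when $n = 2k$.
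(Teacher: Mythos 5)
The paper does not prove this statement: it is quoted as a classical theorem with citations and used only as motivation, so there is no in-paper argument to compare yours against; your proposal has to be judged on its own. The first two paragraphs are a correct and essentially complete rendering of Katona's cycle proof of the inequality. The local lemma (at most $k$ arcs of an intersecting family in any one cyclic ordering, obtained by pairing the $2k-2$ arcs that meet a fixed arc $A$ into $k-1$ mutually disjoint pairs) is right, as is the double count $|\mathcal{F}|\cdot k!\,(n-k)! \leq k\,(n-1)!$, which rearranges to $\binom{n-1}{k-1}$. One caveat you should make explicit: your argument requires $n \geq 2k$ (the paired arcs are disjoint only then), and this hypothesis is genuinely necessary, since for $k < n < 2k$ the whole of $\mathcal{V}(n,k)$ is intersecting and has $\binom{n}{k} > \binom{n-1}{k-1}$ members. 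The statement as printed in the paper silently omits this hypothesis; do not inherit the omission.

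The equality characterisation is where the proposal stops being a proof and becomes a plan, and the plan as written has a real gap. Passing to a fully shifted family via the operators $S_{ij}$ does preserve the size and the intersecting property, and a fully shifted extremal family can be shown to be a star by induction; but the shift operators are not injective, and a family whose image under a sequence of shifts is the star through $i$ need not itself be a star, nor even isomorphic to its shifted image. So concluding that the original $\mathcal{F}$ is a star requires an extra argument: either track at each shift what $S_{ij}(\mathcal{F})$ being extremal forces about $\mathcal{F}$ itself, or replace the uniqueness step entirely by a stability result such as Hilton--Milner (a non-star intersecting family has at most $\binom{n-1}{k-1} - \binom{n-k-1}{k-1} + 1 < \binom{n-1}{k-1}$ members when $n > 2k$), or carry the cycle method's local ``all $k$ arcs share a point'' statement through a global glueing argument across cyclic orderings. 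Any of these routes closes the argument, but none is actually carried out in the proposal, so the ``moreover'' clause remains unproven as written.
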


Equivalently, let $\Gamma(n,k)$ be the graph which has as vertex set $\mathcal{V}(n,k)$, with an edge between two $k$-subsets when they have non-trivial intersection. The first half of Theorem \ref{EKR} gives an upper bound on the size of a maximal clique in $\Gamma(n,k)$ and the second half characterises the maximal cliques precisely. Motivated by Tur\'an's problem in graph theory, Bollob\'as proved a lemma on intersecting set systems; it is notable that the size of the underlying set does not appear in the bound. This statement of the result is due to Lov\'asz.

\begin{thm}[\cite{bollobas,lovasz}]
Let $X_1, \ldots, X_N \in \mathcal{V}(n,k_{1})$ and 
$Y_1, \ldots, Y_N \in \mathcal{V}(n,k_{2})$ be distinct. 
If for each $i,j = 1, \ldots, N$,
\[ X_i \cap Y_i = \emptyset, \;\;\; X_i \cap Y_j \neq \emptyset \]
then $N \leq \binom{k_{1}+k_{2}}{k_{1}}$.
\end{thm}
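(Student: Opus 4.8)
The plan is to use Bollobás's random-ordering (permutation) method, which produces the bound directly and, pleasingly, makes no reference to the ambient size $n$. First I would consider a uniformly random linear order $\prec$ on $[n]$. For each index $i$, let $E_i$ be the event that every element of $X_i$ precedes every element of $Y_i$ under $\prec$. Since $X_i \cap Y_i = \emptyset$, the set $X_i \cup Y_i$ has exactly $k_1 + k_2$ elements, and $E_i$ depends only on their relative order; among the $(k_1+k_2)!$ equally likely relative orders, precisely $k_1!\,k_2!$ of them place all of $X_i$ ahead of all of $Y_i$, so
\[ \Pr[E_i] = \frac{k_1!\,k_2!}{(k_1+k_2)!} = \binom{k_1+k_2}{k_1}^{-1}. \]

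The heart of the argument is to show that the events $E_i$ are pairwise disjoint. Suppose $E_i$ and $E_j$ both occur for some $i \neq j$. Write $m_i$ for the position of the last element of $X_i$ and $\mu_i$ for the position of the first element of $Y_i$, so that $E_i$ is precisely the assertion $m_i < \mu_i$. Because the hypotheses are symmetric in $i$ and $j$, I may relabel so that $m_i \le m_j$. As $i \neq j$, the intersection $X_i \cap Y_j$ is nonempty; choose $w$ in it. Then $w \in X_i$ forces $\mathrm{pos}(w) \le m_i \le m_j$, whereas $w \in Y_j$ together with $m_j < \mu_j$ forces $\mathrm{pos}(w) \ge \mu_j > m_j$, a contradiction. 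Hence no two of the events can co-occur.

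Since the $E_i$ are pairwise disjoint their probabilities sum to at most $1$, and each equals $\binom{k_1+k_2}{k_1}^{-1}$, so $N \cdot \binom{k_1+k_2}{k_1}^{-1} \le 1$, which rearranges to the claimed bound $N \le \binom{k_1+k_2}{k_1}$.

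I expect the only delicate point to be the disjointness step: the trick is to compare the two families through the positions of the \emph{last} elements of $X_i$ and $X_j$, and to exploit the symmetry of the hypothesis to assume $m_i \le m_j$, after which a single cross-intersection $X_i \cap Y_j$ delivers the contradiction. An alternative I would keep in reserve is Lovász's exterior-algebra proof: assign generic vectors in $\mathbb{R}^{k_1+k_2}$ to the ground elements, form the wedge products $v_{X_i} \in \Lambda^{k_1}$ and $w_{Y_j} \in \Lambda^{k_2}$, and observe that $v_{X_i}\wedge w_{Y_j}$ is nonzero exactly when $X_i \cap Y_j = \emptyset$; the resulting triangular nondegeneracy forces the $v_{X_i}$ to be linearly independent, bounding $N$ by $\dim \Lambda^{k_1}(\mathbb{R}^{k_1+k_2}) = \binom{k_1+k_2}{k_1}$. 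This variant has the further merit of establishing the stronger skew version, in which $X_i \cap Y_j \neq \emptyset$ is required only for $i < j$.
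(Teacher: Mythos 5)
Your argument is correct and complete. Note, however, that the paper does not prove this statement at all: it is quoted as a known result of Bollob\'as (in Lov\'asz's formulation) with citations, and is used only as motivation for the main theorem. So there is no in-paper proof to compare against; what you have written is the standard random-permutation proof (Katona's method). All the key points check out: the event $E_i$ depends only on the relative order of the $k_1+k_2$ elements of $X_i \cup Y_i$ (disjointness of $X_i$ and $Y_i$ is what makes this count correct), the pairwise disjointness of the $E_i$ is exactly where the cross-intersection hypothesis enters, and your symmetry reduction to $m_i \le m_j$ is legitimate because the hypothesis $X_i \cap Y_j \neq \emptyset$ is imposed for both orderings of the pair $\{i,j\}$. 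The closing remark about the exterior-algebra proof and the skew version is accurate but not needed for the statement as given.
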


The obvious configuration of subsets here consists of partitions of a set of size $k_{1} + k_{2}$ into complementary parts of sizes $k_{1}$ and $k_{2}$. The content of the theorem is that larger configurations satisfying these intersection properties cannot exist, even if the size of the underlying set is increased. 
Our main result may be stated as a result on the size of a family of sets satisfying hypotheses similar to those of Bollob\'as.

\begin{thm}\label{main}
Let $X_{1}, \ldots, X_{N}$ and $Y_{1}, \ldots, Y_{N} \subseteq \mathcal{V}(n,k)$ be such that $X_i \cap Y_j = \emptyset$ for all $1 \leq i, j \leq N$. Suppose that for all $i \neq j$,
\[|X_i \cap X_j|+|Y_i \cap Y_j|=k \,.\]

Then $N \leq n$ for all $k$. If $n > k4^{k}$, then $N \leq \lfloor \frac{n+2-k}{k} \rfloor$, and the sets attaining this bound may be constructed explicitly.
\end{thm}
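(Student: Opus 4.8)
The plan is to collapse the two families into a single family of $2k$-sets with constant pairwise intersection. Since $X_i \cap Y_j = \emptyset$ holds for \emph{all} pairs $i,j$ (not merely when $i=j$), the union $U := \bigcup_i X_i$ is disjoint from $W := \bigcup_j Y_j$. Setting $Z_i := X_i \cup Y_i$, each $Z_i$ is a $2k$-set, and because its $X$-part lies in $U$ and its $Y$-part in $W$, the cross terms vanish and $|Z_i \cap Z_j| = |X_i \cap X_j| + |Y_i \cap Y_j| = k$ for every $i \neq j$. So $\{Z_1, \dots, Z_N\}$ is a family of $2k$-subsets of $[n]$ meeting pairwise in exactly $k$ points, with the extra feature that every $Z_i$ meets each of the two halves $U, W$ in exactly $k$ points.

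First I would prove $N \le n$ by a Gram-matrix (Fisher-type) argument. Let $z_i \in \{0,1\}^n$ be the characteristic vector of $Z_i$. Then $\langle z_i, z_i\rangle = 2k$ and $\langle z_i, z_j\rangle = k$ for $i \neq j$, so the Gram matrix is $G = k(I_N + J_N)$, where $J_N$ is the all-ones matrix. As $I_N + J_N$ has eigenvalues $N+1$ (once) and $1$ (with multiplicity $N-1$), it is nonsingular; hence $G$ is nonsingular, the $z_i$ are linearly independent in $\mathbb{R}^n$, and $N \le n$. This settles the first assertion for every $k$.

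For the refined bound I would use the structure theory of constant-intersection families. By the Deza–Frankl theorem a family of $r$-sets meeting pairwise in a fixed $\lambda$ is either a sunflower with core of size $\lambda$ and pairwise-disjoint petals, or has at most $r^2 - r + 1$ members; with $r = 2k$ and $\lambda = k$ a non-sunflower family has at most $4k^2 - 2k + 1$ members. The hypothesis $n > k4^k$ is precisely what pushes the target past this threshold, since it gives $\frac{n+2-k}{k} > 4^k - 1 \ge 4k^2 - 2k + 1$; thus if $N$ exceeded the claimed bound, $\{Z_i\}$ would be forced into the sunflower case. (If one wishes to avoid invoking Deza–Frankl, the sunflower structure can be extracted by an iterated pigeonhole argument on the traces $(X_1 \cap X_i, Y_1 \cap Y_i)$, of which there are at most $2^k \cdot 2^k = 4^k$, which is presumably the true source of the constant in the hypothesis.) In the sunflower case, write the core as $C = (C \cap U) \sqcup (C \cap W)$ with $|C\cap U| = p$ and $|C\cap W| = q = k-p$. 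Restricting to the two halves shows that $\{X_i\}$ is itself a sunflower with core $C \cap U$ and disjoint petals of size $q$, and $\{Y_i\}$ a sunflower with core $C \cap W$ and disjoint petals of size $p$. Counting the distinct points used then gives $|U| = p + Nq$ and $|W| = q + Np$, so $n \ge |U| + |W| = k(N+1)$ and $N \le \frac{n-k}{k} \le \lfloor \frac{n+2-k}{k}\rfloor$.

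Finally, for the construction I would exhibit the extremal sunflower explicitly: fix a single $k$-set $F$, take all $X_i$ equal to $F$, and let $Y_1, \dots, Y_N$ be pairwise disjoint $k$-sets in $[n]\setminus F$ (a balanced variant with nonempty cores on both sides serves equally well). This realises $N = \lfloor n/k\rfloor - 1$ pairs satisfying every hypothesis. I expect the main obstacle to be the sunflower-structure step: establishing the dichotomy with a threshold genuinely matching the stated hypothesis $n > k4^k$ (and thereby accounting for the constant $4^k$), and then performing the element count carefully enough to land exactly on $\lfloor \frac{n+2-k}{k}\rfloor$, reconciling it with the slightly stronger $\lfloor \frac{n-k}{k}\rfloor$ that the sunflower count naturally produces.
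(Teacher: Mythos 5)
Your proposal is correct, and its skeleton matches the paper's: reduce to a family of $2k$-sets with constant pairwise intersection $k$, prove $N\le n$ by a rank argument, then split into the sunflower case (counted directly) and the non-sunflower case (bounded by a function of $k$ alone, which the hypothesis $n>k4^{k}$ renders harmless). The differences lie in which tools carry the two key steps. For $N\le n$ the paper cites the Ray--Chaudhuri--Wilson theorem with $s=1$; your Gram-matrix computation $G=k(I_N+J_N)$ is exactly Fisher's inequality in that special case and is self-contained, a small gain in elementarity. For the dichotomy the paper does not invoke Deza's theorem: it runs a bespoke count (Lemmas \ref{TypeI} and \ref{TypeII} together with the argument inside Theorem \ref{TheoremA}) showing that a non-sunflower SOS-clique has at most $k\binom{2k}{k}+1$ elements, whence the constant $k4^{k}$ via $\binom{2k}{k}\le 4^{k}-1$. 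Your appeal to Deza's theorem (a non-sunflower family of $2k$-sets with constant pairwise intersection has at most $4k^{2}-2k+1$ members) is legitimate and gives a much smaller true threshold; your verification that $4k^{2}-2k+1\le 4^{k}-1<\tfrac{n+2-k}{k}$ holds for all $k\ge 1$ (with equality at $k=1$), and your parenthetical pigeonhole fallback on the traces is closer in spirit to what the paper actually does. Your sunflower count $n\ge k(N+1)$ and the construction with all $X_i=F$ and pairwise disjoint $Y_i$ are both fine and agree with the paper's Lemma \ref{TypeI} and Theorem \ref{TheoremA}, which give $\lfloor\frac{\ell+1-k}{k}\rfloor=\lfloor\frac{n-k}{k}\rfloor$ under the correspondence $n=\ell+1$; the discrepancy you flag between this and the stated $\lfloor\frac{n+2-k}{k}\rfloor$ is an off-by-two in the paper's translation from $\ell$ to $n$, not a defect of your argument --- you prove the stronger (and correct) bound, which of course implies the one stated.
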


We will prove this theorem by restating the problem in terms of strongly orthogonal sets of roots in the $A_{\ell}$ root system. 

\begin{defn}\label{SOS-clique}
Let $\Gamma \in SOS_{k}(R),$ we write $|\Gamma|  = \sum_{\gamma \in \Gamma} \gamma$ for the sum of the roots in $\Gamma$. A subset $\mathcal{F} \subseteq SOS_{k}(R)$ is a \textit{SOS-clique} 
    if and only if for every $\Gamma_{i}, \Gamma_{j} \in \mathcal{F}$ there exists some $\Gamma_{i,j} \in \mathcal{F}$ such that 
\begin{equation}\label{PropS}
|\Gamma_i|-|\Gamma_j|= |\Gamma_{i,j}|\,.
\end{equation}
The maximal size of an SOS-clique in $SOS_{k}(R)$ will be denoted $\mu_{k}(R)$.
\end{defn}

A set of strongly orthogonal roots $\Gamma \in SOS_k(A_{\ell})$ corresponds to a pair of $k$-subsets as follows: $X_{i}$ consists of the coordinates where $|\Gamma|$ is positive, and $Y_{i}$ to the coordinates where $|\Gamma|$ is negative. The condition on intersections of Theorem \ref{main} is clearly equivalent to that of Equation \eqref{PropS}, hence the upper and lower bounds on $\mu_{k}(A_{\ell})$ given in the next section will yield a proof of Theorem \ref{main}. 
%Note that Erd\H{o}s--Ko--Rado type questions have been considered previously for $\{0,\pm 1\}$-vectors \cite{dezafrankl, franklkup2018, franklkup2023}. 

This is a problem of Erd\H{o}s--Ko--Rado type: construct a graph in which vertices are labelled by the vectors $|\Gamma|$ for $\Gamma \in SOS_{k}(R)$; with an edge between $|\Gamma_{i}|$ and $|\Gamma_{j}|$ if and only if the difference of their vectors is again the label of a vertex in the graph. The SOS-clique of Definition \ref{SOS-clique} is a clique in this graph. As in the Erd\H{o}s--Ko--Rado theorem, we give both an upper bound on the size of maximal clique and a characterisation of all maximal cliques when $\ell$ is sufficiently large in terms of $k$.

\section{Main Result}

%Our main result in this paper, Theorem \ref{TheoremA}, is a complete description of $SOS_{k}(A_{\ell})$ when $\ell$ is sufficiently large in terms of $k$. 

Recall a pair of roots is strongly orthogonal if and only if their supports are disjoint, and that the set $SOS_{k}(A_{\ell})$ consists of $k$-tuples of pairwise strongly orthogonal roots. It will be convenient to refer to an explicit element of $SOS_{k}(A_{\ell})$ on occasion. Define $\beta_{j} = \sum_{i=1}^{k} \alpha_{i+j}$, then $\Gamma_{1} = \{ \beta_{j} : j =0, \ldots, k-1\}$ is such a set. The sum $|\Gamma_{1}| = \sum_{j=0}^{k-1} \beta_{j}$ is a vector with the first $k$ entries equal to $+1$ followed by $k$ entries equal to $-1$. Let $\mathcal{F}$ be an SOS-clique as in Definition \ref{SOS-clique}. We assume without loss of generality that $\Gamma_{1} \in \mathcal{F}$. For any $\Gamma \in \mathcal{F}$ distinct from $\Gamma_{1}$, the supports of $\Gamma$ and $\Gamma_{1}$ intersect in precisely $k$ co-ordinates. We write $S(\Gamma,\Gamma_{1})$ for this set.

\begin{defn}\label{sunflowerDef} 
Let $\mathcal{F} \subseteq SOS_{k}(A_{\ell})$ be an SOS-clique. If for all $\Gamma_{i}, \Gamma_{j} \in \mathcal{F}$, the sets $S(\Gamma_{i}, \Gamma_{j})$ are equal, we say that $\mathcal{F}$ is a \textit{sunflower}.
\end{defn}

\begin{lemma}\label{TypeI}
Suppose that $\mathcal{F} \subseteq SOS_{k}(A_{\ell})$ is a sunflower. Then $|\mathcal{F}| \leq \lfloor \frac{\ell + 1}{k}\rfloor -1$. 
\end{lemma}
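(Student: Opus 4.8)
The plan is to translate the sunflower hypothesis into a statement about pairwise disjoint blocks of coordinates inside $\{1,\ldots,\ell+1\}$ and then count. First I would fix notation: for each $\Gamma_i \in \mathcal{F}$, let $T_i$ be the support of $|\Gamma_i|$, and let $X_i$ and $Y_i$ be the sets of coordinates on which $|\Gamma_i|$ equals $+1$ and $-1$ respectively. Since $\Gamma_i$ is a set of $k$ pairwise strongly orthogonal roots of $A_\ell$, the roots have pairwise disjoint two-element supports, so $|X_i| = |Y_i| = k$ and $|T_i| = 2k$.

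Next I would record what the SOS-clique condition forces on a pair $\Gamma_i, \Gamma_j$. By \eqref{PropS} the vector $|\Gamma_i| - |\Gamma_j|$ equals $|\Gamma_{i,j}|$ for some member of $\mathcal{F}$, and is therefore a $\{-1,0,+1\}$-vector with exactly $k$ positive and $k$ negative entries. Comparing coordinates, a coordinate in $X_i \cap Y_j$ would contribute $1-(-1)=2$, which is impossible; hence $X_i \cap Y_j = Y_i \cap X_j = \emptyset$, and so $T_i \cap T_j = (X_i \cap X_j) \cup (Y_i \cap Y_j)$. Counting the positive entries of $|\Gamma_i|-|\Gamma_j|$, which come precisely from $X_i \setminus X_j$ and $Y_j \setminus Y_i$, the requirement that there be exactly $k$ of them yields $|X_i \cap X_j| + |Y_i \cap Y_j| = k$. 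Thus $|S(\Gamma_i, \Gamma_j)| = |T_i \cap T_j| = k$ for every pair, consistent with what was already observed for $\Gamma_1$.

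Now I would invoke the sunflower hypothesis of Definition \ref{sunflowerDef}: all pairwise intersections $S(\Gamma_i, \Gamma_j)$ coincide with one common core $S$. Assuming $N := |\mathcal{F}| \geq 2$ (the bound being trivial otherwise), each $T_i$ participates in some pair, so $S \subseteq T_i$ for every $i$, and $|S| = k$ by the previous paragraph. The petals $T_i \setminus S$ then each have size $2k - k = k$, and they are pairwise disjoint, since any common element of $T_i \setminus S$ and $T_j \setminus S$ would lie in $T_i \cap T_j = S$. Consequently the core $S$ together with the $N$ petals forms a collection of $N+1$ pairwise disjoint subsets of $\{1,\ldots,\ell+1\}$, each of size $k$.

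Finally, counting coordinates gives $k(N+1) = |S| + \sum_{i=1}^{N} |T_i \setminus S| = \left|\bigcup_{i=1}^{N} T_i\right| \leq \ell + 1$, so that $N + 1 \leq \lfloor \frac{\ell+1}{k}\rfloor$ since $N+1$ is a positive integer, whence $|\mathcal{F}| = N \leq \lfloor \frac{\ell+1}{k}\rfloor - 1$, as required. The argument is essentially a packing count once the sunflower structure on the supports is established; the only point demanding genuine care is the step showing that distinct supports meet in exactly $k$ coordinates with empty cross-intersections, which is precisely where the difference condition \eqref{PropS}, rather than mere pairwise intersection, must be used.
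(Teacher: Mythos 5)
Your proof is correct and follows essentially the same route as the paper's: the sunflower hypothesis produces a common core of size $k$ together with pairwise disjoint petals of size $k$, and counting coordinates gives $k(|\mathcal{F}|+1) \leq \ell+1$, hence the stated bound. The only difference is that you verify in detail the facts the paper asserts directly (the empty cross-intersections $X_i \cap Y_j = \emptyset$ and the exact intersection size $|T_i \cap T_j| = k$ forced by Equation \eqref{PropS}).
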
 

\begin{proof} 
Let $\mathcal{F} = \{ \Gamma_{1}, \Gamma_{2}, \ldots, \Gamma_{m} \}$. Denote by $X$ the set of column indices in which $|\Gamma_{i}|$ and $|\Gamma_{j}|$ agree. This is a set of size $k$, and it is immediate from Definition \ref{SOS-clique} that the remaining non-zero entries of $|\Gamma_{i}|$ are the only non-zero entries in their respective columns. Hence there exists disjoint $k$-sets $X_{1}, \ldots, X_{m}$ such that the support of $|\Gamma_{i}| = X \cup X_{i}$. Hence $\ell+1$, the total number of columns, is at least $k|\mathcal{F}| + k$.
\end{proof}

Next we show that the number of vectors which do not intersect $\Gamma_{1}$ in a fixed subset of size $k$ is bounded by a function of $k$.

\begin{lemma}\label{TypeII}
Let $\mathcal{F} \subseteq SOS_{k}(A_{\ell})$ be an SOS-clique. Suppose that $S(\Gamma_{i}, \Gamma_{j}) = S(\Gamma_{x}, \Gamma_{y})$ if and only if  $\{ \Gamma_{i}, \Gamma_{j} \} = \{ \Gamma_{x}, \Gamma_{y} \}$. Then $|\mathcal{F}| \leq \binom{2k}{k} +1$.
\end{lemma}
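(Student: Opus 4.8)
The plan is to fix the reference element $\Gamma_{1} \in \mathcal{F}$ and bound the remaining elements by recording, for each, how it overlaps with $\Gamma_{1}$. The first observation is that the support of $|\Gamma_{1}|$ is a set $T$ of exactly $2k$ coordinates: $\Gamma_{1}$ is a union of $k$ pairwise strongly orthogonal roots of $A_{\ell}$, each of which contributes two coordinates to $|\Gamma_{1}|$, and strong orthogonality (disjoint supports) guarantees these $2k$ coordinates are distinct and that no cancellation occurs in the sum. For any $\Gamma \in \mathcal{F}$ distinct from $\Gamma_{1}$, the set $S(\Gamma, \Gamma_{1})$ is by definition the intersection of the supports of $\Gamma$ and $\Gamma_{1}$; as recorded just before Definition \ref{sunflowerDef}, this intersection has size exactly $k$. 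Hence every $S(\Gamma, \Gamma_{1})$ is a $k$-element subset of the fixed $2k$-element set $T$.

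Next I would define the assignment $\phi \colon \mathcal{F} \setminus \{\Gamma_{1}\} \to \binom{T}{k}$, the set of $k$-subsets of $T$, by $\phi(\Gamma) = S(\Gamma, \Gamma_{1})$, and show it is injective. This is precisely where the hypothesis of the lemma enters: if $\phi(\Gamma) = \phi(\Gamma')$ for $\Gamma, \Gamma' \neq \Gamma_{1}$, then $S(\Gamma, \Gamma_{1}) = S(\Gamma', \Gamma_{1})$, so the assumption that $S(\Gamma_{i}, \Gamma_{j}) = S(\Gamma_{x}, \Gamma_{y})$ holds only when $\{\Gamma_{i}, \Gamma_{j}\} = \{\Gamma_{x}, \Gamma_{y}\}$ forces $\{\Gamma, \Gamma_{1}\} = \{\Gamma', \Gamma_{1}\}$; since neither $\Gamma$ nor $\Gamma'$ equals $\Gamma_{1}$, we get $\Gamma = \Gamma'$. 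As $\left| \binom{T}{k} \right| = \binom{2k}{k}$, injectivity of $\phi$ yields $|\mathcal{F}| - 1 = |\mathcal{F} \setminus \{\Gamma_{1}\}| \leq \binom{2k}{k}$, which is the claimed bound.

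The one point that must be handled carefully is the codomain of $\phi$: one has to be certain that $S(\Gamma, \Gamma_{1})$ is a subset of size \emph{exactly} $k$ sitting inside a single ambient set of size $2k$, so that the number of possible values is $\binom{2k}{k}$ and not something larger. This is most transparent in the Erd\H{o}s--Ko--Rado/Bollob\'as reformulation, where the conditions $X_{i} \cap Y_{j} = \emptyset$ and $|X_{i} \cap X_{j}| + |Y_{i} \cap Y_{j}| = k$ translate directly into $S(\Gamma, \Gamma_{1}) = (X \cap X_{1}) \cup (Y \cap Y_{1})$ having size $k$ and lying in the support of $|\Gamma_{1}|$. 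Beyond this bookkeeping there is no genuine obstacle: the statement is essentially a counting argument in which the pairwise-distinctness hypothesis does all the work, guaranteeing that the $k$-subsets $S(\Gamma, \Gamma_{1})$ are distinct and hence at most $\binom{2k}{k}$ in number.
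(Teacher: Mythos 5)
Your proposal is correct and follows essentially the same route as the paper: fix $\Gamma_{1}$, observe that each $S(\Gamma,\Gamma_{1})$ is a $k$-subset of the $2k$-element support of $|\Gamma_{1}|$, and use the distinctness hypothesis to conclude there are at most $\binom{2k}{k}$ other members of $\mathcal{F}$. You merely spell out the injectivity and the codomain bookkeeping that the paper compresses into a single appeal to the pigeonhole principle.
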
 

\begin{proof} 
Consider a fixed $\Gamma_{1} \in \mathcal{F}$. By Definition \ref{SOS-clique}, the support of every other $\Gamma_{i} \in \mathcal{F}$ intersects the support of $\Gamma_{1}$ in $k$ columns. By hypothesis, these $k$-sets are all distinct. Hence $|\mathcal{F}| \leq \binom{2k}{k}+1$, by the pigeonhole principle. 
%Denote by $i_1 < i_2 < \ldots < i_{2k-1} < i_{2k}$ the ordered non-zero coordinates of $|\Gamma_i|$, and by $j_1 < j_2 < \ldots < j_{2k-1} < j_{2k}$ the ordered non-zero coordinates of $|\Gamma_j|$. By Equation \eqref{PropS} each of $\Gamma_{i}$ and $\Gamma_{j}$ intersect the support of $\Gamma_{1}$ in a set of size $k$. By the hypothesis, these sets of size $k$ must be distinct.
\end{proof}

We recall a classical result of Ray-Chaudhuri and Wilson, and then we will be in a position to prove our main theorem. 

\begin{thm}[\cite{RCW}] \label{RCW}
Let $\mathcal{F}$ be a family of $k$-subsets of $[n]$, and let $S \subseteq [k-1]$ be of size $s$. Suppose that all pairwise intersections of elements of $\mathcal{F}$ have size in $S$. Then $|\mathcal{F}| \leq \binom{n}{s}$. 
\end{thm}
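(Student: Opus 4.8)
The plan is to prove Theorem \ref{RCW} by the linear-algebra (polynomial) method of Frankl and Wilson. To each set $A \in \mathcal{F}$ I would associate its characteristic vector $v_A \in \{0,1\}^n \subseteq \mathbb{R}^n$, so that $\langle v_A, v_A \rangle = |A| = k$ and $\langle v_A, v_B\rangle = |A \cap B|$ for distinct $A, B$. Writing $S = \{l_1, \ldots, l_s\}$, I would attach to each $A$ the degree-$s$ polynomial
\begin{equation*}
f_A(x) = \prod_{j=1}^{s}\bigl(\langle x, v_A\rangle - l_j\bigr)
\end{equation*}
in the variables $x = (x_1, \ldots, x_n)$. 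The hypothesis $S \subseteq [k-1]$ enters here: since every $l_j \leq k-1 < k$, we get $f_A(v_A) = \prod_j (k - l_j) \neq 0$, whereas $f_A(v_B) = 0$ for every $B \neq A$ because $|A \cap B| \in S$.

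This triangular behaviour forces linear independence: if $\sum_A c_A f_A$ vanishes on $\{v_B : B \in \mathcal{F}\}$, then evaluating at $v_B$ leaves only the term $c_B f_B(v_B)$, whence $c_B = 0$. Thus $|\mathcal{F}|$ is bounded by the dimension of the space spanned by the $f_A$, viewed as functions on $\{0,1\}^n$. Reducing each $f_A$ modulo the relations $x_i^2 = x_i$ produces a multilinear polynomial of degree at most $s$, and the triangular property is preserved; at this stage one obtains only the crude bound $|\mathcal{F}| \leq \sum_{i=0}^{s}\binom{n}{i}$, the dimension of the space of multilinear polynomials of degree at most $s$.

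The step that actually yields the sharp bound $\binom{n}{s}$ --- and the one I expect to be the main obstacle --- is to exploit the uniformity of the family. Every $v_A$ lies on the slice $\sum_{i=1}^n x_i = k$, and on this slice a low-degree multilinear monomial can be promoted to higher degree: for $|T| = d$ one has the identity $(k-d)\prod_{i \in T} x_i = \sum_{j \notin T}\prod_{i \in T \cup \{j\}} x_i$, whose coefficient $k - d$ is nonzero precisely because $d \le s \le k-1$, again using $S \subseteq [k-1]$. Iterating this identity lets me rewrite every multilinear monomial of degree $d < s$, and hence each reduced $f_A$, as a linear combination of multilinear monomials of degree exactly $s$, valid as a function on the slice. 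Since the reduced $f_A$ remain linearly independent on $\{v_B\}$ and now live in the span of the $\binom{n}{s}$ squarefree degree-$s$ monomials, I conclude $|\mathcal{F}| \le \binom{n}{s}$. The delicate point throughout is tracking exactly where uniformity and the constraint $S \subseteq [k-1]$ are used, since dropping either one costs the precise binomial bound.
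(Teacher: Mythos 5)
The paper does not actually prove this statement: it is quoted as a classical theorem of Ray-Chaudhuri and Wilson, cited to \cite{RCW}, and used as a black box in the proof of Theorem \ref{TheoremA}, so there is no internal argument to compare yours against. Judged on its own merits, your outline is correct and complete. The triangular evaluation $f_A(v_A)=\prod_j(k-l_j)\neq 0$ while $f_A(v_B)=0$ for $B\neq A$ does force linear independence; multilinear reduction is value-preserving on $\{0,1\}^n$ and yields the crude bound $\sum_{i\le s}\binom{n}{i}$; and your identity $(k-d)\prod_{i\in T}x_i=\sum_{j\notin T}\prod_{i\in T\cup\{j\}}x_i$, valid on the slice $\sum_i x_i=k$ of the Boolean cube, correctly promotes every squarefree monomial of degree $d<s$ into the span of the homogeneous degree-$s$ monomials, since every coefficient $k-d,\dots,k-(s-1)$ met during the iteration is nonzero when $s\le k-1$. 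This is the standard polynomial-method proof in the Frankl--Wilson tradition; where Alon, Babai and Suzuki obtain the sharp $\binom{n}{s}$ by adjoining the auxiliary polynomials $(\sum_i x_i-k)\prod_{i\in T}x_i$ for $|T|\le s-1$ to pad the independent family, you instead work modulo the slice relation directly, and the two devices are equivalent. (The original Ray-Chaudhuri--Wilson argument via higher incidence matrices is quite different, but nothing in the paper depends on which proof is used.) The only stylistic caveat is that your writeup is conditional in tone; the mathematics it describes goes through without modification.
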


\begin{thm}\label{TheoremA}
For any $k \in \mathbb{N}$, we have $\mu_{k}(A_{\ell}) \leq \ell+1$. 
For $\ell > k4^{k}$,  
\[ \mu_k(A_{\ell}) = \left\lfloor \frac{{\ell}+1-k}{k} \right\rfloor\,.\]
If $\ell > k4^{k}$ and $|\mathcal{F}| = \mu_{k}(A_{\ell})$ then $\mathcal{F}$ is a sunflower. 
\end{thm}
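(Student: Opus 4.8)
The plan is to pass to the supports $T_i := \mathrm{supp}(\Gamma_i)$, which are $2k$-subsets of $[\ell+1]$, and to exploit that the intersection hypotheses force these supports to meet pairwise in the \emph{constant} $k$. Indeed, for distinct $\Gamma_i,\Gamma_j\in\mathcal F$ the cross-disjointness $X_i\cap Y_j=\emptyset$ gives $T_i\cap T_j=S(\Gamma_i,\Gamma_j)$, so $|T_i\cap T_j|=k$ for every pair. From this the two easy bounds fall out. For the lower bound I would exhibit a sunflower: fixing the positive core $\{1,\dots,k\}$, take $v_i=\mathbf 1_{\{1,\dots,k\}}-\mathbf 1_{P_i}$ where $P_1,\dots,P_m$ are pairwise disjoint $k$-subsets of $\{k+1,\dots,\ell+1\}$; one checks directly that this is an SOS-clique with $m=\lfloor (\ell+1-k)/k\rfloor$, so $\mu_k(A_\ell)\ge\lfloor(\ell+1-k)/k\rfloor$. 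For the universal bound $\mu_k(A_\ell)\le\ell+1$ I would apply Theorem~\ref{RCW} to $\{T_i\}$ with the single allowed intersection size $\{k\}$ (so $s=1$), giving $N\le\binom{\ell+1}{1}=\ell+1$.

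The structural heart is a block decomposition. Fix $\Gamma_1\in\mathcal F$, normalise $T_1=[2k]$, and partition $\mathcal F\setminus\{\Gamma_1\}$ according to the value of $S_i:=T_i\cap T_1\in\binom{[2k]}{k}$. If $S_i=S_j=S$ then $T_i\cap T_j\cap T_1=S$, which forces $T_i\cap T_j=S$ since both sides have size $k$; hence each block together with $\Gamma_1$ is a sunflower in the sense of Definition~\ref{sunflowerDef}, so by Lemma~\ref{TypeI} each block has at most $\lfloor(\ell+1)/k\rfloor-1$ elements. The key rigidity claim I would then prove is: \emph{if some block has at least $k+1$ elements, then $\mathcal F$ is a sunflower}. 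The petals $P_i=T_i\setminus T_1$ of a single block are pairwise disjoint $k$-subsets of $U:=[\ell+1]\setminus T_1$; any $\Gamma^*$ lying in a different block has petal $P^*$ with $|P_i\cap P^*|=k-|S_i\cap S^*|\ge1$ for every block element (because $|T_i\cap T^*|=k$ and supports split over $T_1\sqcup U$). As the sets $P_i\cap P^*$ are disjoint subsets of $P^*$, having $k+1$ of them forces $|P^*|\ge k+1>k$, a contradiction; so no other block exists.

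With the rigidity claim the argument closes by a dichotomy. If $\mathcal F$ is a sunflower, Lemma~\ref{TypeI} gives $|\mathcal F|\le\lfloor(\ell+1)/k\rfloor-1=\lfloor(\ell+1-k)/k\rfloor$, which the construction attains, so $\mu_k(A_\ell)=\lfloor(\ell+1-k)/k\rfloor$; and conversely any family of this maximal size must be a sunflower, yielding the final assertion of Theorem~\ref{TheoremA}. If $\mathcal F$ is not a sunflower, the rigidity claim forces every block to have at most $k$ elements, and the supports $\{T_i\}$ form a $k$-intersecting family of $2k$-sets that is not a $\Delta$-system; such families are bounded by a function of $k$ alone, and the hypothesis $\ell>k4^{k}$ is exactly what makes this $k$-only bound strictly smaller than $\lfloor(\ell+1-k)/k\rfloor$, so the non-sunflower case is never extremal.

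The hard part is precisely this last estimate: bounding a non-sunflower configuration by a function of $k$ that beats the sunflower value. The crude route — at most $\binom{2k}{k}$ blocks, each of size at most $k$, giving $N\le 1+k\binom{2k}{k}$ — loses a factor of $k$ and would only force the weaker threshold $\ell\gtrsim k^{2}4^{k}$. To reach the stated $\ell>k4^{k}$ I would sharpen the bound to $\binom{2k}{k}+1$: the cleanest way is to control how blocks of intermediate sizes $2,\dots,k$ can coexist, via the cross-petal identity $|P_i\cap P_j|=k-|S_i\cap S_j|$, reducing to the all-distinct regime of Lemma~\ref{TypeII}; equivalently one may invoke the Deza--Frankl dichotomy for constant-intersection families, which bounds a non-$\Delta$-system of $2k$-sets by $(2k)^2-2k+1$. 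Either estimate sits comfortably below $\lfloor(\ell+1-k)/k\rfloor$ once $\ell>k4^{k}$, and managing those intermediate blocks is where essentially all the remaining work lies.
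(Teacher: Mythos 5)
Your proposal is correct in outline and follows essentially the same strategy as the paper's proof: pass to the supports $T_i\in\mathcal{V}(\ell+1,2k)$, which meet pairwise in exactly $k$ points; get the universal bound $\ell+1$ from Theorem \ref{RCW} with $s=1$; and then run a dichotomy between sunflowers (bounded via Lemma \ref{TypeI}, and attained by your explicit construction, which the paper leaves implicit) and non-sunflowers (bounded by a function of $k$ alone). Your block decomposition and rigidity claim are a cleaner packaging of the paper's step ``the support of $\Gamma_{m+1}$ must meet each $X_i$, hence $m-1\le k$,'' and that part of your argument is sound. Where you genuinely diverge is the final count, and here your instincts are better than the paper's. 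The paper uses exactly the ``crude route'' you describe, $|\mathcal F|\le 1+k\binom{2k}{k}$, and then asserts the threshold $\ell>k4^{k}$; as you observe, this count only drops below the sunflower value $\lfloor(\ell+1-k)/k\rfloor$ once $\ell$ is of order $k^{2}4^{k}$ (already at $k=4$, $\ell=k4^k+1$ one has $1+4\binom{8}{4}=281>255=\lfloor(\ell+1-k)/k\rfloor$), so your diagnosis that a sharper non-sunflower bound is needed identifies a real looseness in the published argument. Your repair via the Deza dichotomy --- a family of $2k$-sets with constant pairwise intersection that is not a $\Delta$-system has at most $(2k)^2-2k+1$ members, which is below $4^{k}-1$ for all $k\ge2$ --- does close the argument at the stated threshold and is a legitimate citation. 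The one soft spot is your alternative sharpening to $\binom{2k}{k}+1$ by ``reducing to the all-distinct regime of Lemma \ref{TypeII}'': that reduction is asserted but not carried out and is not obviously available, so you should lean on Deza--Frankl (or accept a threshold of order $k^{2}4^{k}$) rather than leave that step as a sketch.
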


\begin{proof}
Let $\Gamma_{1}, \Gamma_{2}, \ldots, \Gamma_{m} \in \mathcal{F}$ be a maximal sunflower of $\mathcal{F}$. As in the proof of Lemma \ref{TypeI}, there exist disjoint sets $X, X_{1}, \ldots, X_{m}$ of coordinates, each of size $k$, such that the support of $\Gamma_{i}$ is $X \cup X_{i}$. If there is no vector satisfying the condition of Lemma \ref{TypeII} then $|\mathcal{F}| = m+1$. Since $\ell+1 \geq k(m+1)$, the conclusion holds. 

Now suppose that there exists $\Gamma_{m+1} \in \mathcal{F}$ such that $S(\Gamma_{1}, \Gamma_{m+1}) \neq X$. The support of $\Gamma_{m+1}$ intersects $X \cup X_{i}$ in $k$ points, and so must intersect each $X_{i}$ non-trivially. Hence $m-1 \leq k$. Counting vectors by their relation with $\Gamma_{1}$, forming a sunflower with $\Gamma_{1}$. There are at most $\binom{2k}{k}-1$ possible intersections with the support of $\Gamma_{1}$ distinct from $X$. For each such intersection, there are at most $k$ vectors sharing that support. Hence $|\mathcal{F}| \leq k\binom{2k}{k} + 1$. Thus the hypothesis that $\mathcal{F}$ is not a sunflower leads to an upper bound on $|\mathcal{F}|$ which is independent of $\ell$. For fixed $k$, once $|\mathcal{F}| > k\binom{2k}{k} + 1$, all SOS-cliques are sunflowers. 

By Theorem \ref{RCW}, the number of $2k$-sets such that all pairwise intersections have size $k$ is bounded by $\ell+1$, for any choice of $\ell$ and $k$. This gives the general upper bound. A standard bound on central binomial coefficients gives $\binom{2k}{k} \leq 4^{k} -1$. Hence, when $|\mathcal{F}| \geq k 4^{k}$ then $\mathcal{F}$ is necessarily a sunflower. Both the stronger bound $|\mathcal{F}|\leq \left\lfloor \frac{{\ell}+1-k}{k} \right\rfloor$ and the characterisation of SOS-cliques meeting the bound follow.  
\end{proof} 

In the interest of presenting an accessible proof, we made no attempt to optimise the constant $k4^{k}$ in the proof of Theorem \ref{TheoremA}. It is likely that this can be improved with a more careful analysis. Trivially, $\mu_{1}(A_{\ell}) = \ell$ for any $\ell \in \mathbb{N}$. The next result evaluates $\mu_{2}(A_{\ell})$ precisely. 

\begin{thm}\label{smallA}
The sequence $\mu_{2}(A_{\ell})$ is $0,0,1,1,3$ for $1 \leq \ell \leq 5$. For $6 \leq \ell \leq 13$, it is equal to $6$ and for $\ell \geq 13$, its value is $\lfloor (\ell-1)/2 \rfloor$.
\end{thm}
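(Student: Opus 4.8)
The plan is to analyse $\mu_2(A_\ell)$ through the equivalent combinatorial formulation of Theorem \ref{main} with $k=2$ and $n=\ell+1$, and to read off the answer as the larger of a \emph{sunflower} contribution and a bounded \emph{non-sunflower} contribution, while tracking how much room each extremal construction requires.

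First I would exploit the hypothesis $X_i\cap Y_j=\emptyset$ for all $i,j$: this forces every positive support into one block $P:=\bigcup_i X_i$ and every negative support into a disjoint block $Q:=\bigcup_i Y_i$. Hence each support $Z_i:=X_i\cup Y_i$ is a $4$-set meeting $P$ in two points and $Q$ in two points, and the intersection condition becomes $|Z_i\cap Z_j|=2$ for $i\neq j$. In this language the sunflowers of Definition \ref{sunflowerDef} are exactly the families with $Z_i\cap Z_j$ constant; by Lemma \ref{TypeI} these have size at most $\lfloor(\ell-1)/2\rfloor$, and a ``double star'' $X_i=\{p,p_i\}$, $Y_i=\{q,q_i\}$ with the $p_i$ and $q_i$ distinct attains this, giving the lower bound $\mu_2(A_\ell)\ge\lfloor(\ell-1)/2\rfloor$ whenever $\ell\ge3$.

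The heart of the argument is to show that a non-sunflower SOS-clique has size at most $6$, independently of $\ell$. I would record the trace graphs $\mathcal{G}$ (whose edges are the distinct values $X_i$ on $P$) and $\mathcal{H}$ (the distinct $Y_i$ on $Q$), together with the two forcing rules that follow from $|X_i\cap X_j|+|Y_i\cap Y_j|=2$: a repeated $X$-value forces the corresponding $Y$-values to be pairwise disjoint, while two disjoint $X$-values force their (necessarily unique) $Y$-values to coincide. Combining these with the classical fact that an intersecting family of edges is a star or a triangle, I would split into cases. If $\mathcal{G}$ (or, symmetrically, $\mathcal{H}$) is intersecting, then all cross-relations are of ``meet-in-one'' type, the $Y$-values are distinct, and a short analysis of how a disjoint pair of $Y$-edges confines all remaining edges to the four transversals of a $K_4$ caps the size at $6$. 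If instead both $\mathcal{G}$ and $\mathcal{H}$ contain a matching of size two, the forcing rules push all $X$-values into the six edges of a single $K_4$ on $P$ and all $Y$-values into a $K_4$ on $Q$; passing to the three perfect-matching classes of each $K_4$ gives a map $i\mapsto(\pi^P(X_i),\pi^Q(Y_i))\in\{1,2,3\}^2$ whose image is a partial permutation matrix with at most two indices per cell, so again $N\le 3\cdot 2=6$. A $K_3\times K_4$ configuration shows that $6$ is attained, and it uses $n=7$, i.e. $\ell\ge6$.

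Finally I would feed in the room constraints for small $\ell$: the size-$6$ witnesses need $n\ge7$, and every non-sunflower needs $n\ge6$ (a triangle in $P$ paired with a triangle in $Q$), while $n\le5$ leaves $|P|$ or $|Q|$ equal to $2$ and so forces a sunflower. This pins the values $0,0,1,1$ for $\ell\le4$ and isolates $\ell=5$, where the triangle gives $3$ but no size-$6$ configuration fits, beating the sunflower bound $2$. For $\ell\ge6$ the answer is $\max\{6,\lfloor(\ell-1)/2\rfloor\}$, which equals $6$ for $6\le\ell\le13$ and $\lfloor(\ell-1)/2\rfloor$ for $\ell\ge13$, with the two agreeing at $\ell=13$. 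I expect the main obstacle to be precisely the non-sunflower bound: obtaining the clean cap of $6$, rather than the cap $k\binom{2k}{k}+1=13$ supplied by the proof of Theorem \ref{TheoremA}, requires the full case analysis of $\mathcal{G}$ and $\mathcal{H}$ above, and care that the sunflower/non-sunflower dichotomy is drawn at ``$Z_i\cap Z_j$ constant'' rather than at ``$X_i$ constant''.
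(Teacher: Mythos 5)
Your proposal is correct and arrives at the right answer, but the decisive step---capping non-sunflower SOS-cliques at $6$---is argued by a genuinely different route from the paper. The paper splits on whether $\mathcal{F}$ contains a sunflower of three vectors: if so, the family is confined to $8$ columns and bounded via mutually orthogonal sign sequences; if not, the pairwise intersections are all distinct, giving at most $\binom{4}{2}+1=7$, and the value $7$ is excluded by appealing to the uniqueness of the Hadamard matrix of order $8$. You instead split on the structure of the trace graphs $\mathcal{G}$ and $\mathcal{H}$ of positive and negative supports, using the star/triangle dichotomy for intersecting edge-families and the three perfect-matching classes of $K_4$; this is more systematic, avoids the appeal to Hadamard classification, and recovers exactly the paper's two extremal configurations (the $K_3\times K_4$ family on $7$ columns coming from the Fano plane, and the star-times-$K_4$ family on $8$ columns). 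Your handling of small $\ell$ by counting the columns each configuration requires is also more explicit than the paper's ``easy hand computation''. Two details to nail down in a full write-up: in the case where $\mathcal{G}$ is intersecting you must also dispose of the sub-case where $\mathcal{H}$ is intersecting as well (two stars give a sunflower; a triangle caps the family at $3$), and in the two-matchings case you should first rule out three pairwise disjoint $X$-values (these would force all $Y_i$ equal, contradicting the matching in $\mathcal{H}$) before concluding that the $X$-values are confined to a single $K_4$. Note finally that, like the paper, you read Definition \ref{SOS-clique} as requiring $|\Gamma_i|-|\Gamma_j|$ to be realised by some element of $SOS_k(A_\ell)$ rather than of $\mathcal{F}$ itself; this is the intended reading (the paper's own projective-plane construction fails the literal one), and under it your double-star witness for the sunflower lower bound is valid.
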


\begin{proof}
Suppose that $\mathcal{F} \subseteq SOS_{2}(A_{\ell})$ is of maximal size. If $\mathcal{F}$ is a sunflower then $|\mathcal{F}| = \lfloor (l-1)/2 \rfloor$. We will show that otherwise, $|\mathcal{F}| \leq 6$. By the argument of Theorem \ref{TheoremA}, if $\mathcal{F}$ is not a sunflower, then it contains a sunflower of size at most $3$. 

If $\mathcal{F}$ contains a sunflower of three vectors, then there are $\ell+1 = 8$ columns. The following example shows that $6$ vectors are possible in this case. 
\[\begin{matrix}
    +&-& +&-& 0&0& 0&0\\
    +&-& 0&0& +&-& 0&0\\
    +&-& 0&0& 0&0& +&-\\
    +&0 &+&0& 0&-& 0&-\\
    +&0 &0&-& +&0& 0&-\\
    +&0 &0&-& 0&-& +&0
  \end{matrix}\]
No seventh vector is possible. The sunflower is forced; if these vectors intersected on two entries of the same sign a fourth vector would be impossible without forcing all vectors to satisfy the conditions of Lemma \ref{TypeI}. Each subsequent row is determined by the sequence of signs of its entries. These are sequence of length $4$ which contain two $1$'s and two $-1$'s and are mutually orthogonal as vectors in $\mathbb{R}^{4}$. It is easily verified that there are at most three such, and computation by hand shows that the configuration above is essentially unique. 

Next, suppose that at most two vectors intersect on any pair of columns. The example of Figure \ref{fig:fano} shows that there exist six vectors on seven columns with this property. By Definition \ref{SOS-clique}, every pair of vectors intersects in some pair of columns. Thus there can be at most $7$ vectors in total: there are at most $\binom{4}{2}$ pairs of columns in which a vector can intersect a fixed vector, and each intersection is unique. In a configuration with seven vectors, every pair of vectors intersects in two columns; and every pair of columns in the support of a vector is shared with a unique vector. While it is possible to find $7$ subsets of an $8$-set intersecting pairwise in a set of size $2$, as demonstrated below, there is no consistent way to choose columns of negative entries such that every row has two negative entries. 
\[\begin{matrix}
    1&1&1&1& 0&0&0&0\\
    1&1&0&0& 1&1&0&0\\
    0&0&1&1& 1&1&0&0\\
    1&0&1&0& 1&0&1&0\\
    1&0&0&1& 1&0&0&1\\
    0&1&0&1& 1&0&1&0\\
    0&1&1&0& 1&0&0&1\\
  \end{matrix}\]
If the $0$ entries in the above matrix are replaced by $-1$, these are the non-constant rows of a Hadamard matrix of order $8$. The uniqueness of the Hadamard matrix of order $8$ up to equivalence shows that $\mu_{2}(A_{7}) = 6$. 

Observe that rows $1,2,3$ of the displayed matrix are forced by the intersection condition, and that every subsequent vector must have a single non-zero entry in each of the first three pairs of columns. This leaves only one additional non-zero entry in each subsequent column, and a hand computation verifies that the remaining entries must be placed in two columns to complete the array. 

Thus, if $\mathcal{F} \subseteq SOS_{2}(A_{\ell})$ is not a sunflower then $|\mathcal{F}| \leq 6$. An easy hand computation gives the maximal size of an SOS-clique for $\ell \leq 5$, while sunflowers are of maximal size when $\ell \geq 13$; furthermore every maximal clique is a sunflower when $\ell \geq 15$. This completes the proof. 
\end{proof} 

Note that the above two results first appeared in a preprint which also contains further results on other types of root systems \cite{Gashi}. However, relations with Erd\H{o}s--Ko--Rado theory were not discussed in that work.

\section{Relation to finite projective planes}
In this section we use finite projective planes to construct large SOS-cliques. Recall 
that a finite projective plane of order $q$ is an incidence structure consisting of points and lines which satisfies the following properties: 
\begin{itemize}
    \item Every line contains $q+1$ points, and each point is incident with $q+1$ lines.
    \item Any two distinct lines meet at exactly one point, and any two distinct points lie on a unique line. 
    \item There exist four points, no three lying on a line. 
\end{itemize}
It is an elementary exercise to see that there are $q^2+q+1$ points and the same number of lines in a projective plane of order $q$. The incidence matrix $N$ for such a plane is a square matrix of size $q^2+q+1$, in which columns are labelled by points, and rows by lines. An entry is $+1$ when a point is incident with a line and $0$ otherwise. The axioms force $NN^{\top} = qI + J$ where $J$ is the all-ones matrix. 

\begin{prop} \label{ProjProp}
Let $r_{1}, r_{2}, \ldots, r_{n}$ be the rows of the incidence matrix of a projective plane. 
Then the vectors $r_{1} - r_{i}$ for $2\leq i\leq n$ yield an SOS-clique of size $q^{2} + q$ in $A_{q^{2} + q}$.
\end{prop}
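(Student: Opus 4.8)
The plan is to read off everything from the single identity $NN^{\top} = qI + J$, which records that each line carries exactly $q+1$ points and that two distinct lines meet in exactly one point. First I would fix the dimensions: with $\ell = q^2 + q$, the $n = q^2 + q + 1$ rows live in $\mathbb{R}^{\ell+1} = \mathbb{R}^{q^2+q+1}$, whose coordinates I index by the points of the plane, so this is indeed the correct ambient space for $A_{\ell}$.

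Next I would show that each $v_i := r_1 - r_i$ (for $2 \leq i \leq n$) is the sum-vector $|\Gamma_i|$ of a genuine element $\Gamma_i \in SOS_q(A_{\ell})$. Since $r_1$ and $r_i$ are $0/1$ vectors, each with exactly $q+1$ ones, and $\langle r_1, r_i \rangle = 1$, their supports overlap in a single coordinate. Hence $v_i$ has exactly $(q+1)-1 = q$ entries equal to $+1$ (points on the first line but not the $i$-th), exactly $q$ entries equal to $-1$ (points on the $i$-th line but not the first), and all remaining entries $0$. Any $\{0,\pm1\}$-vector with $q$ positive and $q$ negative coordinates is realised as $|\Gamma|$ by matching positive coordinates to negative ones and taking the corresponding roots $\varepsilon_a - \varepsilon_b$; these have pairwise disjoint supports, hence are strongly orthogonal, giving $\Gamma_i \in SOS_q(A_{\ell})$ with $|\Gamma_i| = v_i$.

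The key step is verifying the SOS-clique (adjacency) condition of Definition \ref{SOS-clique}: for every $i \neq j$ the difference $|\Gamma_i| - |\Gamma_j|$ must itself be the sum-vector of some element of $SOS_q(A_{\ell})$. Here the construction pays off, because $v_i - v_j = (r_1 - r_i) - (r_1 - r_j) = r_j - r_i$ is again a difference of two rows. Applying the previous paragraph verbatim to the pair $(r_i, r_j)$ — once more two distinct lines meeting in exactly one point — shows $r_j - r_i$ is a $\{0,\pm1\}$-vector with $q$ positive and $q$ negative entries, hence equals $|\Gamma_{i,j}|$ for some $\Gamma_{i,j} \in SOS_q(A_{\ell})$. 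Equivalently, in the language of Theorem \ref{main}, one checks $X_i \cap Y_j = \emptyset$ for all $i,j$ and $|X_i \cap X_j| + |Y_i \cap Y_j| = q$ for $i \neq j$, splitting the latter according to whether the three lines indexed by $1, i, j$ are concurrent or not; in the concurrent case the counts are $q$ and $0$, and otherwise $q-1$ and $1$, so both give $q$.

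Finally, the $q^2 + q$ vectors $v_2, \ldots, v_n$ are pairwise distinct, since $r_1 - r_i = r_1 - r_j$ forces $r_i = r_j$ and the rows of an incidence matrix are distinct. This yields an SOS-clique of size $n-1 = q^2 + q$, as claimed. I expect no genuine obstacle here: the only thing to get right is the observation that the relevant difference reduces to $r_j - r_i$, so that the one fact ``two distinct lines meet in exactly one point'' simultaneously controls both that each $v_i$ is a valid SOS sum-vector and that each pairwise difference is too; everything else is bookkeeping on supports.
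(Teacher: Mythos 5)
Your proposal is correct and follows essentially the same route as the paper: both arguments reduce everything to the facts that each line has $q+1$ points and two distinct lines meet in exactly one point, deduce that each $r_1 - r_i$ is a $\{0,\pm 1\}$-vector with $q$ positive and $q$ negative entries, verify the pairwise condition by counting coordinates of agreement, and note that any such vector decomposes as a sum of $q$ strongly orthogonal roots by pairing positive coordinates with negative ones. The one place you improve on the paper is the telescoping observation $v_i - v_j = r_j - r_i$: since the difference of two clique vectors is again a difference of two rows of the incidence matrix, the single computation for a pair of distinct lines handles both the shape of each $v_i$ and the shape of each pairwise difference, making the paper's two-case analysis (lines $1,i,j$ concurrent versus not) unnecessary — though you carry out that case split as well, and it matches the paper's counts of $q$ and $0$ versus $q-1$ and $1$ exactly. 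Your added remarks that the $v_i$ are pairwise distinct and that the ambient dimension is $q^2+q+1$ are small points the paper leaves implicit.
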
 

\begin{proof} 
There is a unique column in which vectors $r_{1}$ and $r_{i}$ are both equal to $1$, and each vector has $q+1$ entries $1$. Thus $r_{1} - r_{i}$ has $q$ entries $1$ and $q$ entries equal to $-1$. 

A second vector $r_{1} - r_{j}$ agrees with $r_{1} - r_{i}$ in precisely $q$ columns. There are two possibilities: if the vectors $r_{1}, r_{i}, r_{j}$ all share a $1$ in the same column, then $r_{1} - r_{i}$ and $r_{1} - r_{j}$ do not agree in any negative entry, but agree in $q$ positive entries. Otherwise, $r_{i}$ and $r_{j}$ share a non-zero entry in a column disjoint from the support of $r_{1}$, and each vector intersects $r_{1}$ in a single (distinct) column. 
Hence there are $q-1$ columns in which $r_{1}-r_{i}$ and $r_{1}-r_{j}$ share an entry $+1$ and a unique column in which they have an entry $-1$. In every case, the vectors agree in $q$ columns. These vectors can be trivially decomposed into sums of $k$ roots of the $A_{q^{2} + q}$ root system, and so yield an $SOS_{q}(A_{q^{2} + q})$. 
\end{proof} 

As an example consider the projective plane of order $2$ (Fano plane). Figure \ref{fig:fano} shows the differences between the first and the $6$ subsequent rows of this incidence matrix. Note that each pair of vectors intersects in two columns, but they do not all intersect in the same pair of columns; and so do not satisfy the conditions of Lemma \ref{TypeI}. 

\begin{figure}[h]
\centering
\begin{subfigure}{0.49\linewidth} 
\centering
  $\begin{pmatrix}
    1&1&1&0&0&0&0\\
    1&0&0&1&1&0&0\\
    1&0&0&0&0&1&1\\
    0&1&0&1&0&1&0\\
    0&1&0&0&1&0&1\\
    0&0&1&1&0&0&1\\
    0&0&1&0&1&1&0
  \end{pmatrix}$
  %\caption{}
  \label{fig:subfigure1}
\end{subfigure}
\begin{subfigure}{0.49\linewidth} 
\centering
  $\begin{matrix}
    0&+&+&-&-&0&0\\
    0&+&+&0&0&-&-\\
    +&0&+&-&0&-&0\\
    +&0&+&0&-&0&-\\
    +&+&0&-&0&0&-\\
    +&+&0&0&-&-&0
  \end{matrix}$
  %\caption{}
  \label{fig:subfigure2}
\end{subfigure}
\caption{Incidence matrix for the Fano plane, shown next to the first row minus subsequent rows.} \label{fig:fano}
\end{figure}

For a prime power $q$, Proposition \ref{ProjProp} gives an SOS-clique of size $q^{2} + q$ in $SOS_{q}(A_{q^{2} + q})$. On the other hand, the bound of Theorem \ref{TheoremA} is $q^{2} + q + 1$. Hence our bound is within $1$ of optimality infinitely often. The following questions are natural. 
\begin{itemize} 
     \item \textbf{Question 1:} Is the bound of Theorem \ref{TheoremA} ever met with equality? 
     
     For fixed $k$, what is maximal size of an SOS-clique, $\mu_{k}(A_{\ell})$, when $k \leq \ell \leq k4^{k}$? 
    \item \textbf{Question 2:} For fixed $k$, what is the smallest $C$ such that all maximal SOS-cliques are sunflowers for $\ell > C$? 
    \item \textbf{Question 3:} Do there exist constants $c, C$ such that for every $\ell > C$, an SOS-clique of size at least $\ell - c$ comes from a projective plane?
    \end{itemize}
The corresponding questions for other root systems also remain open, and will be the subject of a future investigation by the authors. 

\bibliographystyle{plain}
\bibliography{references}
\end{document}